\documentclass[11pt]{article}
\usepackage{amssymb,epsfig}
\usepackage{amsmath}
\usepackage{latexsym}
\usepackage{booktabs}
\usepackage{multirow,array}
\usepackage{textcomp}
\usepackage{eufrak}
\usepackage{amsthm}
\usepackage{hyperref}
\usepackage{inputenc}

\title{\bf Interpolation and approximation of piecewise smooth functions with corner discontinuities on sigma quasi-uniform grids.}

\author{J.A. Padilla \thanks{ Departamento de Matem\'atica Aplicada y Estad\'{\i}stica.
  Universidad Polit\'ecnica de Cartagena (Spain).
e-mail:{\tt joanpaso.ingnaval@gmail.com}
}
 \and J.C.
Trillo\thanks{ Departamento de Matem\'atica Aplicada y
Estad\'{\i}stica.
   Universidad  Polit\'ecnica de Cartagena (Spain).
e-mail:{\tt jc.trillo@upct.es.}
}
}

\DeclareRobustCommand{\perthousand}{%
  \ifmmode
    \text{\textperthousand}%
  \else
    \textperthousand
  \fi}

\begin{document}

\maketitle

\newtheorem{proposition}{Proposition}
\newtheorem{lemma}{Lemma}
\newtheorem{definition}{Definition}
\newtheorem{theorem}{Theorem}
\newtheorem{corollary}{Corollary}
\newtheorem{remark}{Remark}
\newtheorem{algorithm}{Algorithm}

\begin{abstract}
This paper provides approximation orders for a class of nonlinear interpolation procedures
for univariate data sampled over $\sigma$ quasi-uniform grids.
The considered interpolation is built using both essentially nonoscillatory
(ENO) and subcell resolution (SR) reconstruction techniques. The main target of these nonlinear techniques is to reduce
the approximation error for functions with isolated corner singularities and in turn this fact makes them useful
for applications to other fields, such as shock capturing computations or image processing. We start proving the approximation capabilities
of an algorithm to detect the
presence of isolated singularities, and then we address the approximation order attained by the mentioned interpolation procedure.
For certain nonuniform grids with a maximum spacing between nodes $h$  below a critical value $h_c$, the optimal approximation order is recovered, as it happens for uniformly smooth
functions \cite{ACDD}.

\end{abstract}

{\bf Key Words.} Interpolation, approximation, detection, discontinuities

\vspace{10pt} {\bf AMS(MOS) subject classifications.} 41A05,
41A10, 65D05.


\section{Introduction}

Nonlinear techniques emerge as a way to avoid undesirable effects of classical linear methods to tackle different kind of problems. When interpolating data coming from a smooth function affected by
a jump discontinuity, the results using linear methods show a bad behavior around the discontinuity, typically giving rise to diffusion of the jump and spurious oscillations known as Gibbs effects.
These problems can be avoided by treating the data differently depending on their characteristics, that is, in a data dependent way, and moreover implementing an adapted nonlinear approximation in these cases.
Some nonlinear methods such as ENO method \cite{HAR}, WENO method and subcell resolution \cite{HAR,ADO,R4}, PPH method \cite{ADLT} have emerged in a variety of applications with reasonably good results. Among
these applications we can mention signal processing, generation of curves and surfaces, subdivision and multiresolution schemes, numerical integration, and the numerical solution of certain differential equations.

It is quite common to work with uniform grids as a default case, and preferably because of simplicity and computational economy. However, some applications require dealing with nonuniform grids. Therefore,
some methods need to be adapted and the theory extended to guarantee the properties of the methods in this new scenario. The theoretical results are quite complicated to get with general nonuniform grids, if possible at all. Therefore, some extra restrictions are added to the nonuniform grids that make it possible to develop theoretical results at the same type that keep the properties of the grids that appear in practice. One extended definition of a particular nonuniform grid which cover almost all practical cases is the so called $\sigma$ quasi-uniform grid, i.e., a grid for which there exists a minimum grid spacing $h_{min},$ a maximum
grid spacing $h_{max}$ and $\frac{h_{max}}{h_{min}}\leq \sigma.$  Within this setting one can extend the theoretical results in a similar way as it is done for uniform grids, requiring more tedious computations, but
keeping the same essence as in the uniform case. For example, in \cite{OT3} the authors extend the work in \cite{ADLT} to $\sigma$ quasi-uniform grids in the mentioned way.

Our aim with this article is to generalize to $\sigma$ quasi-uniform grids the results given in \cite{ACDD} about a singularity detection mechanism and also the results about a subcell resolution type reconstruction operator defined in the same article. The singularity detection mechanism is interested on its own, since it can be applied in combination with many other reconstruction operators, and this is a reason to
make a detailed study of it.

The paper is organized as follows. In section \ref{sec2}, we extend an example in \cite{ACDD} to $\sigma$ quasi-uniform grids. This example shows that
one cannot expect more than second order accuracy when a corner singularity occurs. In section \ref{sec3}, the corresponding extension of both a specific corner
detection mechanism and an ENO-SR type interpolation process is carried out.
In section \ref{sec4}, we show following the same track as in \cite{ACDD} that detection always occurs for $h < h_c$ and that the position of the
singularity is accurately estimated. The results of section \ref{sec4} are used in section \ref{sec5}
to prove that the presented ENO-SR type interpolation process has accuracy of order $O(h^m)$
for $h$ smaller than $Kh_c$, where $0 < K < 1$ is a fixed constant, and that it is second
order accurate for all $h > 0,$ which is the best that we can hope for according to
the example of section 2, i.e. it happens the same behavior as with uniform grids.
In section \ref{sec6} we present some numerical examples to reinforce the previous theoretical results. Finally, in section \ref{sec7} we give some conclusions.


\section{An ENO-SR type interpolation and a clarifying example} \label{sec2}

We start exposing a breve summary of ENO and Subcell Resolution SR interpolation methods (see \cite{HAR,ACDD}) over nonuniform grids.
Let $X=(x_i)_{i \in \mathbb{Z}}$ be a nonuniform grid and let $f_i$ represent the pointvalues of a continuous function $f(x)$ with a corner discontinuity at $\mu \in [x_j,x_{j+1}]$ for a certain value of $j.$
Each interval $[x_i,x_{i+1}]$ is associated with a stencil $S_i$ consisting of $m$ points, $S_i=\{x_{i-m_1+1},\ldots,x_{i+m_2}\}$ with $m_1,m_2$ two integers satisfying $m_1+m_2=m, m_1>0, m_2>0.$ An interpolation procedure is built using this stencil just
by using Lagrange interpolation, that is,
\begin{equation} \label{s2_1}
I_Xf(x)=p_{i,m}(x), x \in [x_i,x_{i+1}],
\end{equation}
where $p_{i,m}(x)$ stands for the Lagrange interpolation for the stencil $S_i.$

Before addressing questions dealing with orders of approximation, we introduce an important definition.

\begin{definition}
A nonuniform grid $X=(x_i)_{i\in \mathbb{Z}}$ is said to be a $\sigma$ quasi-uniform grid if there exist $h_{min}=\min\limits_{i\in \mathbb{Z}} h_i,$
$h_{max}=\max\limits_{i\in \mathbb{Z}} h_i,$ and a finite constant $\sigma$ such that $\frac{h_{max}}{h_{min}}\leq \sigma,$ where $h_i:=x_{i}-x_{i-1}.$
\end{definition}

If the stencil $S_i$ does not touch the interval where the singularity lies, then it is well known that the order of approximation
is $O(h^m),$ with $h=h_{max}.$ If the stencil touches the corner singularity, then the approximation order is drastically lowered to $O(h),$
and this happens in $m-1$ intervals around the discontinuity. The number of affected intervals can be reduced to only one interval by using
ENO strategies. These strategies are based on a stencil selection mechanism which uses divided differences as indicators of the smoothness of a function.
The smaller the absolute value of the divided difference, the smoother the function in that area. To detect corner singularities it is enough to use
second order divided differences,
\begin{eqnarray} \label{difdiv}
D_if&=&\frac{1}{h_{i+1}(h_{i+1}+h_{i+2})}f_{i}-\frac{1}{h_{i+1}h_{i+2}}f_{i+1}+\frac{1}{h_{i+2}(h_{i+1}+h_{i+2})}f_{i+2}.\\ \notag
\end{eqnarray}
At an interval $I_i=[x_i,x_{i+1}],$  keeping the stencil with $m$ points that contains $I_i$ and gives the smallest divided difference leads us to the
definition of the ENO interpolation polynomial $\tilde{p}_{i,m}(x)$ as the Lagrange interpolation based on that data-dependent stencil.
However, ENO strategies continue to perform inadequately at one interval. In order to correct this situation, the SR procedure emerges. It is based on
a detection mechanism that allows to approximate the corner $\mu$ of the underlying function by $\psi$ with precision $O(h^m),$ and then define the interpolation as,
\begin{equation}\label{SR}
I_Xf(x)=\left\{ \begin{array}{cc}
\tilde{p}_{i,m}(x) & if \ x \in [x_i,x_{i+1}], i\neq j,\\
\tilde{p}_{j,m}^-(x) & if \ x \in [x_j,\psi],\\
\tilde{p}_{j,m}^+(x) & if \ x \in [\psi,x_{j+1}].\\
\end{array}\right.
\end{equation}
Notice that $\tilde{p}_{j,m}^-(\psi)$ must be equal to $\tilde{p}_{j,m}^+(\psi)$ for this interpolation to recover an appropriate approximation of the underlying function.

We aim at proving approximation results for this interpolation over $\sigma$ quasi-uniform grids, generalizing the work in \cite{ACDD} to nonuniform grids.
Let us consider functions $f \in C^m(\mathbb{R}\backslash \mu)$ with $m$ derivatives uniformly bounded on $\mathbb{R}\backslash \mu,$  which implies that $f$  has finite jumps
$[f'],[f''],\ldots$ at the corner discontinuity. Let us call the set of such functions as $\mathbb{A}.$ \\
Following the same track as in \cite{ACDD}, we are going to prove that also for $\sigma$ quasi-uniform grids we have,
\begin{equation} \label{s2_2}
||f-I_Xf||_{L_{\infty}} \leq Ch^m \sup_{\mathbb{R}\backslash \mu} |f^{(m)}(x)|, \ \textrm{if} \ h\leq Kh_c,
\end{equation}
with $0\leq K<1$ a constant and $h_c$ a certain critical value for the grid spacing. In case, $h>h_c$ it is possible to prove,
\begin{equation} \label{s2_3}
||f-I_Xf||_{L_{\infty}} \leq Ch^2 \sup_{\mathbb{R}\backslash \mu} |f^{''}(x)|.
\end{equation}
In general, getting (\ref{s2_2}) for any $h$ is not possible as it is shown with the next example, which is an adapted example for nonuniform grids of the
example given in \cite{ACDD} for uniform grids.\\
Given a $\sigma$ quasi-uniform grid $X,$ let us define two functions $f_{+}$ and $f_{-}$ in the following way,
\begin{eqnarray}\label{s2_4}
f_{+}(x)&=&0, \ \textrm{if} \ x\leq x_j, \quad f_{+}(x)=(x-x_j)(x-x_{j+1}), \ \textrm{if} \ x > x_j,\\ \notag
f_{-}(x)&=&0, \ \textrm{if} \ x\leq x_{j+1}, \quad f_{-}(x)=(x-x_j)(x-x_{j+1}), \ \textrm{if} \ x > x_{j+1}.\\ \notag
\end{eqnarray}
Both functions coincide on the grid points $X$ and therefore $I_Xf_{+}=I_Xf_{-}.$ Moreover,$||f_{+}-f_{-}||_{L_{\infty}}=\frac{h_{j+1}^2}{4},$ since
\begin{equation}\label{s2_5}
||f_{+}-f_{-}||_{L_{\infty}}=\max\limits_{x \in [x_j,x_{j+1}]}\{|(x-x_j)(x-x_{j+1})|\},
\end{equation}
and defining $p_2(x)=x^2-(x_j+x_{j+1})x+x_jx_{j+1},$ we see that it attains a maximum value at $x_{j+\frac{1}{2}}=\frac{x_j+x_{j+1}}{2},$ point where $p_2'(x_{j+\frac{1}{2}})=0$ and
$p_2(x_{j+\frac{1}{2}})=\frac{h_{j+1}^2}{4}.$ \\
Therefore, either $||f_{+}-I_Xf_{+}||_{L_{\infty}}\geq \frac{h_{j+1}^2}{8}$ or $||f_{-}-I_Xf_{-}||_{L_{\infty}}\geq \frac{h_{j+1}^2}{8},$ because otherwise we would have,
\begin{equation} \label{s2_6}
||f_{+}-f_{-}||_{L_{\infty}}\leq ||f_{+}-I_Xf_{+}||_{L_{\infty}}+||f_{-}-I_Xf_{-}||_{L_{\infty}} < \frac{h_{j+1}^2}{8}+ \frac{h_{j+1}^2}{8}=\frac{h_{j+1}^2}{4},
\end{equation}
what is a contradiction with the fact that $||f_{+}-f_{-}||_{L_{\infty}}=\frac{h_{j+1}^2}{4}.$
Thus, we have that at least one of the following inequalities is true,
\begin{eqnarray}
||f_{+}-I_Xf_{+}||_{L_{\infty}}&\geq& \frac{h_{j+1}^2}{8}\geq (\frac{h_{min}}{h_{max}})^2\frac{h_{max}^2}{8}\geq \frac{h^2}{8 \sigma^2} \geq \frac{h^2}{16 \sigma^2} \sup_{\mathbb{R}\backslash \mu} |f_{+}^{''}(x)|,\\ \notag
||f_{-}-I_Xf_{-}||_{L_{\infty}}&\geq& \frac{h_{j+1}^2}{8}\geq (\frac{h_{min}}{h_{max}})^2\frac{h_{max}^2}{8}\geq \frac{h^2}{8 \sigma^2} \geq \frac{h^2}{16 \sigma^2} \sup_{\mathbb{R}\backslash \mu} |f_{-}^{''}(x)|.\\ \notag
\end{eqnarray}
Taking also into account that $\sup_{\mathbb{R}\backslash \mu} |f_{+}^{(m)}(x)|=0$ and $\sup_{\mathbb{R}\backslash \mu} |f_{-}^{(m)}(x)|=0$ for $m>2,$ we observe that a bound of the type (\ref{s2_2}) is not always possible.
This means, that over $\sigma$ quasi-uniform grids, it can not be ensured that the interpolation given by (\ref{SR}) attains more than second order of accuracy for piecewise smooth functions in the previously
defined set $\mathbb{A}$ and for all $h>0.$

\section{A modified ENO-SR detection and interpolation mechanism over $\sigma$ quasi-uniform grids.} \label{sec3}
Both the detection and interpolation mechanisms in this section correspond with an extension to $\sigma$ quasi-uniform grids of the work presented in \cite{ACDD}.
We start adapting the corner detection mechanism. Given $m$ the desired order of approximation, the algorithm marks certain intervals of type $B$ meaning that they potentially
contain a corner singularity. This procedure of labeling the suspicious intervals is defined in two steps as follows,
\begin{itemize}
\item[1.] If
\begin{equation} \label{cond1}
|D_{i-1}f| > |D_{i-1 \pm n}f|, \ n=1,\ldots,m,
\end{equation}
then the intervals $I_{i-1}=[x_{i-1},x_i]$ and $I_{i}=[x_i,x_{i+1}]$ are labeled as $B.$
\item[2.] If
\begin{equation} \label{cond2}
|D_if| > |D_{i+n}|, n=1,\ldots,m-1,
\end{equation}
and
\begin{equation*} \label{cond3}
|D_{i-1}f| > |D_{i-1-n}|, n=1,\ldots,m-1,
\end{equation*}
then the interval $I_i=[x_i,x_{i+1}]$ is labeled as $B.$
\end{itemize}
The rest of intervals are marked as $G,$ what means that they are not suspected of containing a corner singularity.\\
Notice that the divided differences in (\ref{cond1}), (\ref{cond2}), and (\ref{cond3}) are computed by using the expressions for nonuniform grids given
in (\ref{difdiv}). \\

This detection mechanism will be proved to work well for the maximum grid spacing $h:=h_{max}$ associated with the $\sigma$ quasi-uniform grid small enough.
It will detect the interval containing the singularity, and it will ensure that the intervals marked as $G$ do not contain any corner singularity.
It could marked as $B$ some intervals which are $G,$ but this will be solve later on.

Let us introduce a needed parallel lemma to the one proved in \cite{ACDD}. The proof is quite similar and we will not include it. See \cite{ACDD} for more details.

\begin{lemma}
The groups of adjacent $B$ intervals are at most of size $2.$ They are separated by groups of adjacent $G$ intervals which are at least of size $m-1.$
\end{lemma}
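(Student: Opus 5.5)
The plan is a purely combinatorial argument on the labels produced by rules (\ref{cond1}) and (\ref{cond2}). It uses only the strict inequalities between the quantities $|D_kf|$, never their actual values. The nonuniform form (\ref{difdiv}) of the divided differences therefore plays no role, and the argument of \cite{ACDD} should carry over verbatim. To each index $k$ I attach the numbers $a_k:=|D_kf|$. Rule 1 at index $i-1$ says that $a_{i-1}$ is a strict maximum over the window $\{i-1-m,\dots,i-1+m\}$. Rule 2 at index $i$ says that $a_i$ is a strict maximum over $\{i,\dots,i+m-1\}$ and that $a_{i-1}$ is a strict maximum over $\{i-m,\dots,i-1\}$. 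Call an index $k$ \emph{left-dominant} if $a_k>a_{k-n}$ for $n=1,\dots,m-1$, and \emph{right-dominant} if $a_k>a_{k+n}$ for $n=1,\dots,m-1$.

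\textbf{Step 1 (sparsity of dominant indices).} If $k<k'$ with $k'-k\le m-1$, then $k$ cannot be right-dominant while $k'$ is left-dominant. Indeed, that would give $a_k>a_{k'}$ and $a_{k'}>a_k$ simultaneously. Likewise, a rule-1 index $k$ (a strict maximum on a window of radius $m$) excludes every other index within distance $m$ from being left- or right-dominant in its direction. With this, every $B$ label is attached to a ``peak''. A rule-1 peak at $k=i-1$ marks $I_k,I_{k+1}$. A rule-2 configuration marks $I_i$, where $i-1$ is left-dominant and $i$ is right-dominant.

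\textbf{Step 2 (local structure).} I will show that around each peak only the intervals $I_k,I_{k+1}$ (rule 1) or the single $I_i$ (rule 2) are marked. I will also show that the next marked interval lies at distance at least $m-1$. Take two consecutive $B$-producing events. Let $\ell$ be the rightmost left-dominant index used by the first event, and $r$ the leftmost right-dominant index used by the second (for rule 1 the peak serves as both). By Step 1, the second event's left-dominant index $\ell'$ satisfies $\ell'-r_{\text{first}}\ge m$. Here $r_{\text{first}}$ is the right-dominant index of the first event, which is $k$ for rule 1 and $i$ for rule 2. The marked intervals of the first event end at $I_{r_{\text{first}}+1}$ or $I_{r_{\text{first}}}$. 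Those of the second begin at $I_{\ell'}$ or $I_{\ell'+1}$. Counting the unmarked intervals $I_{r_{\text{first}}+2},\dots,I_{\ell'-1}$ gives at least $m-2$ in the worst case. Getting this count up to $m-1$ will require a sharper look. Rule 1 uses the radius-$m$ window, which gains one index; rule 2 only uses radius $m-1$ but marks only one interval. I will go through the four combinations (rule1/rule1, rule1/rule2, rule2/rule1, rule2/rule2) and check that each yields at least $m-1$ $G$ intervals between the groups. Each event marks at most two adjacent intervals, so the separation already shows that groups of adjacent $B$ intervals have size at most $2$.

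\textbf{Main obstacle.} The delicate part is the off-by-one bookkeeping in Step 2, especially the rule2/rule2 and rule2/rule1 cases where windows have radius only $m-1$. Two separate points have to be checked there. First, a rule-2 event must not sit right next to a rule-1 event and enlarge a group to size 3. Second, the gap must really be $m-1$ rather than $m-2$. If the straightforward count falls one short, I will first try to exploit the asymmetry: in rule 2 both dominances are strict in opposite directions, which forces $a_{i-1}$ and $a_i$ to each exceed the other's neighbours. That extra inequality is what should recover the missing index, exactly as in the uniform-grid argument of \cite{ACDD}.
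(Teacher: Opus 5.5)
As written, this is a plan rather than a proof. You have the right reduction, and it is all the paper itself offers: the paper gives no proof and defers to \cite{ACDD}. That deferral is legitimate precisely because, as you note, the labels depend only on the order relations among the $a_k=|D_kf|$. But your proposal stops at the only nontrivial point, and part of the bookkeeping is wrong.

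The second sentence of Step 1 is false. A rule-1 index $k$ rules out a left-dominant index (radius $m-1$) only at distance $\le m-1$, not $m$. Take $a_k=10$, $a_{k+m}=5$, $a_{k+m+1}=3$ and $a_j=1$ otherwise: rule 1 fires at $k$, rule 2 fires at $i=k+m+1$, and $k+m$ is left-dominant at distance exactly $m$. The correct pairwise fact is this: if $r<\ell'$, $a_r$ strictly dominates $a_{r+1},\dots,a_{r+\rho}$, and $a_{\ell'}$ strictly dominates $a_{\ell'-\rho'},\dots,a_{\ell'-1}$, then $\ell'-r\ge\min(\rho,\rho')+1$.

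With this fact your count closes at once, and the critical case is not the one you name. Your worst case $m-2$ arises only for two rule-1 events. There $\rho=\rho'=m$ gives $k'-k\ge m+1$, so $I_{k+2},\dots,I_{k'-1}$ contains $k'-k-2\ge m-1$ intervals. In the mixed cases the rule-2 event marks a single interval, so $\ell'-r\ge m$ already gives at least $m-1$; this is sharp, by the example above. Rule 2 followed by rule 2 gives at least $m$. The rescue you propose is not available anyway: rule 2 imposes no comparison between $a_{i-1}$ and $a_i$, and neither has to exceed the other's neighbours.

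Your block-size claim (``each event marks at most two intervals, so separation gives groups of size at most $2$'') also does not follow. Your separation only covers pairs of events with $r<\ell'$, and distinct events can share dominant indices. Rule 1 at $k$ coexists with rule 2 at $i=k$ or $i=k+1$: take $a_k=10$, $a_{k-1}=5$ or $a_{k+1}=5$, and all others $1$. Rule 2 can also fire at both $i$ and $i+1$, which in fact forces rule 1 at $i$.

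The fix is to apply the pairwise fact to \emph{all} pairs of events, not just consecutive ones. Attach to each event $E$ its dominant indices $[\ell_E,r_E]$: this is $\{k\}$ for rule 1 and $\{i-1,i\}$ for rule 2. Two events are then either separated as above, or their index sets intersect. Intersection cannot chain. If $E_1$ and $E_3$ are disjoint with $r_1<\ell_3$ but both meet $E_2$, then $\ell_3-r_1\le 1<m$, contradicting the pairwise fact (for example, rule 2 at $i$ and at $i+2$ would need $a_i>a_{i+1}>a_i$). So each cluster of events has pairwise intersecting index sets, which share a common index $p$. All marks of that cluster then lie in $\{I_p,I_{p+1}\}$.

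That gives groups of size at most $2$. Applying the gap count to the last mark of one group and the first mark of the next, which come from separated events, gives the $m-1$ intervals of type $G$ between groups.
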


Following the same track as in \cite{ACDD}, we define a interpolation procedure in the following way.
\begin{itemize}
\item[1.] If $I_i=[x_{i},x_{i+1}]$ is of type $G,$ then the interpolation $I_Xf(x)$ on $I_i$ amounts to the polynomial $p_{i,m}(x)$ of degree $m-1$ obtained using ENO strategies to select a stencil of $m$
points containing $x_i$ and $x_{i+1}$ and staying within the smoothest part of the function.
\item[2.] If $I_i=[x_{i},x_{i+1}]$ is an isolated $B$-type interval, then we build the interpolatory polynomial $\tilde{p}_{i}^{-}(x)$ based on the stencil $\{x_{i-m+1},\ldots,x_i\}$ and the polynomial $\tilde{p}_{i}^+(x)$ based on the
stencil $\{x_{i+1},\ldots,x_{i+m}\},$ and we use them to predict the location of the corner. If both polynomials intersect at a unique point $\psi,$ then,
\begin{equation*}\label{SR1}
I_Xf(x)=\left\{ \begin{array}{cc}
\tilde{p}_{i,m}^-(x) & if \ x \in [x_i,\psi],\\
\tilde{p}_{i,m}^+(x) & if \ x \in [\psi,x_{i+1}].\\
\end{array}\right.
\end{equation*}
If the polynomials do not intersect, then we redefine the interval as type $G,$ and we return to the first point.
\item[3.] If both $I_i=[x_{i},x_{i+1}]$ and $I_{i+1}=[x_{i+1},x_{i+2}]$ are labeled as $B$, then we treat $I=I_i\cup I_{i+1}$ as a unique interval, and we proceed in the same way as in the previous point. We build the interpolatory polynomial $p_{i}^{-}(x)$ based on the stencil $\{x_{i-m+1},\ldots,x_i\}$ and the polynomial $p_{i+1}^+(x)$ based on the
stencil $\{x_{i+2},\ldots,x_{i+m+1}\},$ and we use them to predict the location of the corner. If both polynomials intersect at a unique point $\psi,$ then,
\begin{equation*}\label{SR1}
I_Xf(x)=\left\{ \begin{array}{cc}
\tilde{p}_{i,m}^-(x) & if \ x \in [x_i,\psi],\\
\tilde{p}_{i+1,m}^+(x) & if \ x \in [\psi,x_{i+2}].\\
\end{array}\right.
\end{equation*}
If the polynomials do not intersect, then we redefine both intervals $I_i$ and $I_{i+1}$ as $G,$ and we apply the first point.
Notice that in case of getting an intersection point $\psi$ in the interval $I,$ then the reconstruction $I_Xf(x)$ does not interpolate the original function at $x_{j+1}.$
\end{itemize}

\section{Properties of the detection mechanism} \label{sec4}

In this section the main features of the detection mechanism are studied. The content is given by means of two lemmas. In the first one, it is ensured that the corner discontinuity will
be detected under a critical value of the diameter of the grid $h_{max}.$ This study is parallel to the one carried out in \cite{ACDD} for uniform grids.
\begin{lemma} \label{lem1proDe}
Let $f(x)$ be a continuous function on $\mathbb{R}\backslash \mu,$ with a bounded second derivative on $\mathbb{R}\backslash \mu,$ with $\mu$ a discontinuity in the first derivative of
the function. Let us define the critical value for the grid,
\begin{equation} \label{lemma2_0}
h_c:=\frac{|[f']|}{4 \sup_{\mathbb{R}\backslash \mu} |f^{''}(x)|},
\end{equation}
where $[f']$ denotes the jump of the first derivative at $\mu.$ Then, for $h_{max}<h_c,$ the interval $I_j=[x_j,x_{j+1}]$ which contains $\mu$ is marked as type $B.$ If $\mu$ is close to one
of the extremes of the interval $I_j$ in such a way that,
\begin{equation} \label{lemma2}
\frac{x_{j+1}-\mu}{h_j+h_{j+1}}-\frac{\mu-x_{j}}{h_{j+1}+h_{j+2}}>1/4 \qquad \textrm{or} \qquad \frac{\mu-x_{j}}{h_{j+1}+h_{j+2}}-\frac{x_{j+1}-\mu}{h_j+h_{j+1}}>1/4
\end{equation}
then the corresponding adjacent interval is also marked as type $B.$
\end{lemma}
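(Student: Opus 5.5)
The plan is to follow the argument of \cite{ACDD}, adapted to the nonuniform second divided differences (\ref{difdiv}). The first step is to decompose $f$ near $\mu$. On each side of $\mu$, $f$ is $C^2$ with $|f''|\le M:=\sup_{\mathbb{R}\backslash\mu}|f''|$. I would write $f=g+[f']\,(x-\mu)_+$, where $g$ is continuous with a $C^1$ continuation across $\mu$ and $|g''|\le M$ away from $\mu$. Here one has to check that subtracting the ramp removes the derivative jump exactly and that $f$ is continuous at $\mu$, as the statement assumes.

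The second step is to bound the divided differences whose stencil avoids $\mu$. If the stencil $\{x_k,x_{k+1},x_{k+2}\}$ of $D_kf$ does not contain $\mu$ in its interior, the mean value form of second divided differences gives $D_kf=\tfrac12 f''(\xi)$, so $|D_kf|\le M/2$.

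The third step computes the two divided differences whose stencils contain $\mu$, namely $D_{j-1}f$ (stencil $x_{j-1},x_j,x_{j+1}$) and $D_jf$ (stencil $x_j,x_{j+1},x_{j+2}$). For the ramp part I would evaluate the divided difference of $(x-\mu)_+$ exactly. On the stencil of $D_{j-1}$ only $x_{j+1}$ contributes, giving $[f'](x_{j+1}-\mu)/(h_{j+1}(h_j+h_{j+1}))$. On the stencil of $D_j$ only $x_{j+1},x_{j+2}$ contribute, and a short computation gives $[f'](\mu-x_j)/(h_{j+1}(h_{j+1}+h_{j+2}))$. The $g$ part is bounded by $M/2$ via a Peano-kernel argument for piecewise $C^2$ functions that are $C^1$ across $\mu$.

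Writing $\alpha=\frac{x_{j+1}-\mu}{h_j+h_{j+1}}$ and $\beta=\frac{\mu-x_j}{h_{j+1}+h_{j+2}}$, this yields
\[
|D_{j-1}f|\ge \frac{|[f']|\alpha}{h_{j+1}}-\frac M2,\qquad |D_jf|\ge \frac{|[f']|\beta}{h_{j+1}}-\frac M2 .
\]
Since $\alpha(h_j+h_{j+1})+\beta(h_{j+1}+h_{j+2})=h_{j+1}$, at least one of them must be large. The crude route is
\[
\max(\alpha,\beta)\ge \frac{h_{j+1}}{2(h_j+2h_{j+1}+h_{j+2})}\ge \frac{h_{j+1}}{8h_{\max}},
\]
which gives $\max(|D_{j-1}f|,|D_jf|)\ge |[f']|/(8h_{\max})-M/2$. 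With $h_{\max}<h_c$ this exceeds $M/2$, since $|[f']|/(8h_{\max})>M/2$. I expect this constant bookkeeping, that is, making $h_c=|[f']|/(4M)$ exactly suffice on a nonuniform grid, to be the main obstacle. If the crude bound falls short I would try the comparison $\alpha$ versus $\beta$ directly. Every other $D_kf$ among the neighbours appearing in (\ref{cond1}) and (\ref{cond2}) has a stencil avoiding $\mu$, so it is at most $M/2$.

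The fourth step is a case split. If $|D_{j-1}f|$ is the larger of the two and beats its other neighbours, condition (\ref{cond1}) with index $i=j$ fires. It labels $I_{j-1}$ and $I_j$ as $B$, provided it also beats $|D_jf|$. If instead $|D_{j-1}f|$ and $|D_jf|$ both dominate everything else, condition (\ref{cond2}) with $i=j$ labels $I_j$ as $B$. In the case where $D_jf$ is the dominant one, (\ref{cond1}) at index $j+1$ marks $I_j$ and $I_{j+1}$. So $I_j$ is labeled $B$ in every case.

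For the second claim, suppose $\alpha-\beta>1/4$. I would then show $|D_{j-1}f|-|D_jf|\ge |[f']|(\alpha-\beta)/h_{j+1}-M>0$, using $h_{j+1}\le h_{\max}<h_c$ and hence $|[f']|/(4h_{j+1})>M$. In that case $D_{j-1}f$ is the strict maximum, so (\ref{cond1}) marks $I_{j-1}$ as well. Symmetrically, $\beta-\alpha>1/4$ makes $D_jf$ the strict maximum, and (\ref{cond1}) at index $j+1$ marks $I_{j+1}$. This last comparison is exactly where the threshold $1/4$ and the factor $4$ in $h_c$ match.
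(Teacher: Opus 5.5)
Your plan is the paper's argument: ramp decomposition, the exact ramp divided differences $|[f']|\alpha/h_{j+1}$ and $|[f']|\beta/h_{j+1}$, the bound $M/2$ for the $C^1$ part and for stencils avoiding $\mu$, a case split between (\ref{cond1}) and (\ref{cond2}), and the comparison $|D_{j-1}f|-|D_jf|\ge |[f']|(\alpha-\beta)/h_{j+1}-M$ for the second claim. However, the constant step you flagged as the obstacle is wrong as written. From $\max(|D_{j-1}f|,|D_jf|)\ge |[f']|/(8h_{\max})-M/2$ and $|[f']|/(8h_{\max})>M/2$ you only get $\max(|D_{j-1}f|,|D_jf|)>0$, not $>M/2$. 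The competing divided differences can be as large as $M/2$, so the ramp contribution has to exceed $M$. With your bound that needs $|[f']|/(8h_{\max})>M$, i.e.\ $h_{\max}<h_c/2$. As it stands, your fourth step, which needs the larger of $|D_{j-1}f|,|D_jf|$ to beat all its neighbours, is unjustified for $h_c/2\le h_{\max}<h_c$. So the first claim is proved only below half the critical scale.

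The loss comes from the spurious factor $2$ in your averaging bound. Because $\alpha(h_j+h_{j+1})+\beta(h_{j+1}+h_{j+2})=h_{j+1}$, a weighted-average argument gives directly $\max(\alpha,\beta)\ge h_{j+1}/(h_j+2h_{j+1}+h_{j+2})\ge h_{j+1}/(4h_{\max})$. Hence $\max(|D_{j-1}f|,|D_jf|)\ge |[f']|/(4h_{\max})-M/2>M-M/2=M/2$ for $h_{\max}<h_c$, which is exactly sharp. The paper obtains the same constant by assuming $\mu\le (x_j+x_{j+1})/2$. Then $x_{j+1}-\mu\ge h_{j+1}/2$ and $|D_{j-1}f|\ge |[f']|/(2(h_j+h_{j+1}))-M/2\ge |[f']|/(4h_{\max})-M/2$.

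With this one-line repair your proof is complete. In the second claim, state two things explicitly. First, $\alpha-\beta>1/4$ forces $\alpha>1/4$, so $|D_{j-1}f|>|[f']|/(4h_{j+1})-M/2>M/2$; this is what makes $D_{j-1}f$ the strict maximum. Second, the difference estimate uses the upper bound $|D_jf|\le |[f']|\beta/h_{j+1}+M/2$, not the lower bound you displayed. Your symmetric treatment has one small advantage over the paper's reduction to $\mu$ in the left half: it handles both alternatives of the closeness condition wherever the corner sits in $I_j$.
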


\begin{proof}
Without loss of generality, we can admit that $\mu$ is located on the first half of the interval $I_j,$ i.e., $x_j\leq \mu \leq x_{j+\frac{1}{2}}=\frac{x_j+x_{j+1}}{2}.$ For $i\leq j-2$ and $i\geq j+1$ we have,
\begin{equation} \label{lemma2_1}
|D_if|=|f[x_i,x_{i+1},x_{i+2}]|=\frac{f''(\theta_i)}{2}|\leq \frac{1}{2}\sup_{\mathbb{R}\backslash \mu} |f^{''}(x)|,
\end{equation}
with $\theta_i$ an intermediate point between $x_i$ and $x_{i+2}.$\\
For $i=j-2$ and $i=j+1,$ the second order divided differences can be approximated by decomposing $f=f_1+f_2,$ with,
\begin{equation} \label{lemma2_2}
f_1(x)=[f'](x-\mu)_{+}=\left\{\begin{array}{cc}
[f'](x-\mu) & \textrm{if} \ x\geq \mu,\\
0 & \textrm{otherwise,}
\end{array}\right.
\qquad f_2(x)=f(x)-f_1(x).
\end{equation}
Notice that $f_2(x)$ is a $C^1$ function with a bounded second derivative on $\mathbb{R}\backslash \mu$ such that
\begin{equation}\label{lemma2_3}
\sup_{\mathbb{R}\backslash \mu} |f_2^{''}(x)|=\sup_{\mathbb{R}\backslash \mu} |f^{''}(x)|.
\end{equation}
\end{proof}
For all $i \in \mathbb{Z}$ we have,
\begin{equation}\label{lemma2_4}
|D_i f| \leq \frac{1}{2}\sup_{\mathbb{R}\backslash \mu} |f^{''}(x)|.
\end{equation}
We also have,
\begin{equation}\label{lemma2_5}
|D_{j-1}f_1|=|\frac{f_1(x_{j-1})}{h_{j}(h_j+h_{j+1})}-\frac{f_1(x_j)}{h_jh_{j+1}}+\frac{f_1(x_{j+1})}{h_{j+1}(h_j+h_{j+1})}|=|\frac{(x_{j+1}-\mu)[f']}{h_{j+1}(h_j+h_{j+1})}|.
\end{equation}
From (\ref{lemma2_4}) and (\ref{lemma2_5}), it follows that,
\begin{eqnarray}\label{lemma2_6}
|D_{j-1}f|&=&|D_{j-1}f_1 + D_{j-1}f_2|\geq |D_{j-1}f_1|-|D_{j-1}f_2|\geq  |\frac{(x_{j+1}-\mu)[f']}{h_{j+1}(h_j+h_{j+1})}| - \frac{1}{2}\sup_{\mathbb{R}\backslash \mu} |f^{''}(x)|\\ \notag
&\geq& |\frac{[f']}{2(h_j+h_{j+1})}| - \frac{1}{2}\sup_{\mathbb{R}\backslash \mu} |f^{''}(x)|.\\ \notag
\end{eqnarray}
Denoting $h=h_{max},$ from (\ref{lemma2_6}) we get,
\begin{equation} \label{lemma2_7}
|2h^2D_{j-1}f|\geq \frac{h}{2}|[f']|- h^2\sup_{\mathbb{R}\backslash \mu} |f^{''}(x)|.
\end{equation}

Thus, if $h=h_{max}<h_c$ with $h_c$ given in (\ref{lemma2_0}), then,
\begin{equation} \label{lemma2_8}
|2h^2D_{j-1}f|> h^2\sup_{\mathbb{R}\backslash \mu} |f^{''}(x)|.
\end{equation}

By the transitivity of the inequalities, using (\ref{lemma2_1}) and (\ref{lemma2_8}), we get that for $h<h_c,$
\begin{equation} \label{lemma2_9}
|D_{j-1}f|> |D_if|,
\end{equation}
for all  $i\leq j-2$ and $i\geq j+1.$\\

If $|D_jf|<|D_{j-1}f|,$ then both intervals $I_{j-1}$ and $I_j$ are labeled as type $B$ because of (\ref{cond1}). Otherwise, if
$|D_jf|\geq|D_{j-1}f|,$ then $I_j$ is marked as $B$ due to (\ref{cond2}).

Finally,

\begin{eqnarray} \label{lemma2_10}
|2h^2D_jf_1|&=&2h^2|\frac{f_1(x_{j})}{h_{j+1}(h_{j+1}+h_{j+2})}-\frac{f_1(x_{j+1})}{h_{j+1}h_{j+2}}+\frac{f_1(x_{j+2})}{h_{j+2}(h_{j+1}+h_{j+2})}|\\ \notag
&=&2h^2|-\frac{f_1(x_{j+1})}{h_{j+1}h_{j+2}}+\frac{f_1(x_{j+2})}{h_{j+2}(h_{j+1}+h_{j+2})}|\\ \notag
&=& \frac{2h^2}{h_{j+1}(h_{j+1}+h_{j+2})} (\mu-x_j)|[f']|.\\ \notag
\end{eqnarray}

Thus,
\begin{equation} \label{lemma2_11}
|2h^2D_jf|\leq |2h^2D_jf_1|+|2h^2D_jf_2|\leq \frac{2h^2}{h_{j+1}(h_{j+1}+h_{j+2})} (\mu-x_j)|[f']| + h^2\sup_{\mathbb{R}\backslash \mu} |f^{''}(x)|.
\end{equation}

Imposing the condition in (\ref{lemma2}), we ensure that,
\begin{equation} \label{lemma2_12}
\frac{2h^2}{h_{j+1}(h_{j+1}+h_{j+2})} (\mu-x_j)|[f']| + h^2\sup_{\mathbb{R}\backslash \mu} |f^{''}(x)| < 2h^2|\frac{(x_{j+1}-\mu)[f']}{h_{j+1}(h_j+h_{j+1})}| - h^2\sup_{\mathbb{R}\backslash \mu} |f^{''}(x)|,
\end{equation}
and therefore due to (\ref{lemma2_6}), (\ref{lemma2_11}) and (\ref{lemma2_12}) we get,
\begin{equation*}
|2h^2D_jf|<|2h^2D_{j-1}f|,
\end{equation*}
what means that for $h<h_c$ if the condition (\ref{lemma2}) is satisfied, then the intervals $I_{j-1}$ and $I_j$ are both labeled type $B.$

In the next lemma, and following again the same track as in \cite{ACDD}, we prove that for $h<h_c,$ being $h_c$ the critical scale given in (\ref{lemma2_0}),
the position of the corner singularity is accurately detected.

\begin{lemma} \label{lem3proDe}
Let $f(x)$ be a continuous function on $\mathbb{R}\backslash \mu,$ with uniformly bounded $mth$-derivative on $\mathbb{R}\backslash \mu,$ with $\mu$ a discontinuity in the first derivative of
the function. Let $X$ be a $\sigma$ quasi-uniform grid with $\sigma < \frac{3}{2}.$ Let us define the critical value for the grid,
\begin{equation} \label{lemma2_0}
h_c:=\frac{|[f']|}{4 \sup_{\mathbb{R}\backslash \mu} |f^{''}(x)|},
\end{equation}
where $[f']$ denotes the jump of the first derivative at $\mu.$ Then, there exist constants $C,k, \ C>0, \ 0 < k < 1$ such that, for $h_{max}<k h_c,$ the following items hold,
\begin{itemize}
\item[1.] The singularity $\mu$ is inside a $B$-type interval $I_j$ (Case 1) or in a $B$-pair of intervals $I_{j-1}, I_j.$
\item[2.] The two polynomials $p_j^-,p_{j}^+$ (Case 1) or $p_{j-1}^-,p_{j}^+$ (Case 2), which appear in the definition of $I_Xf,$ have a unique intersection point
$y$ in the interval $I_j$ (Case 1) or in $I_{j-1}\cup I_j$ (Case 2).
\item[3.] The distance between the real singularity $\mu$ and its estimation $y$ is bounded by,
\begin{equation} \label{lemma3_1}
|\mu -y|\leq C \frac{h^m \sup_{\mathbb{R}\backslash \mu} |f^{(m)}(x)|}{|[f']|},
\end{equation}
where $C>0$ is a given constant which depends on the grid.
\end{itemize}
\end{lemma}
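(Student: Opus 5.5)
The plan follows the uniform-grid argument of \cite{ACDD}, reorganised around the decomposition $f=f_1+f_2$ from the proof of Lemma \ref{lem1proDe}.

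\textbf{Item 1.} This follows directly from Lemma \ref{lem1proDe}, since $k<1$ gives $h_{max}<h_c$. Lemma \ref{lem1proDe} shows that $I_j\ni\mu$ is marked $B$. By the first lemma of Section \ref{sec3}, $B$-groups have size at most $2$, so $\mu$ lies either in an isolated $B$ interval (Case 1) or in a $B$-pair. After relabeling the indices, the pair is $I_{j-1},I_j$ (Case 2).

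\textbf{Setup for items 2 and 3.} Consider Case 1; Case 2 is identical with $I_{j-1}\cup I_j$ in place of $I_j$. Let $p^-$ and $p^+$ be the Lagrange interpolants of degree $m-1$ built on the left and right stencils. These stencils lie entirely on one side of $\mu$, so $p^-$ interpolates the smooth extension $f^-$ of $f|_{(-\infty,\mu]}$, and $p^+$ interpolates $f^+$ similarly. I would take $f^\pm$ to be Taylor-type extensions that keep $\sup|f^{\pm(m)}|$ and $\sup|f^{\pm\prime\prime}|$ controlled by those of $f$. Standard Lagrange error on a $\sigma$ quasi-uniform grid then gives, for $x$ in the interval $I$ under consideration,
\[
|p^\pm(x)-f^\pm(x)|\le C_1 h^m \sup_{\mathbb{R}\backslash\mu}|f^{(m)}|,\qquad |p^{\pm\prime}(x)-f^{\pm\prime}(x)|\le C_1 h^{m-1}\sup_{\mathbb{R}\backslash\mu}|f^{(m)}|.
\]
The constant $C_1$ depends on $m$ and $\sigma$, because the distance from $I$ to the farthest stencil node is at most $(m+1)h$.

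\textbf{Item 2.} Set $g=p^+-p^-$ and $F=f^+-f^-$. Then $F(\mu)=0$ and $F'(\mu)=[f']$, and $|F''|\le 2\sup|f''|$ near $I$. For $x\in I$, Taylor's theorem gives
\[
|F'(x)|\ge |[f']|-4h\sup|f''|,
\]
and the right-hand side is at least $(1-k)|[f']|$ because $h<kh_c$. Combining this with the derivative estimate for $p^\pm$, we get $|g'|\ge (1-k)|[f']|-2C_1h^{m-1}\sup|f^{(m)}|>0$ on $I$ for $k$ small. This last step is not automatic. The quantity $\sup|f^{(m)}|$ is not controlled by $h_c$, so this smallness condition has to be folded into the choice of $k$, or into $C$ as in \cite{ACDD}. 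Once $g'$ does not vanish, $g$ is strictly monotone on $I$, so it has at most one zero there.

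For existence, evaluate at the endpoints, where $|g-F|\le 2C_1h^m\sup|f^{(m)}|$. Since $F(x_{j+1})$ and $F(x_j)$ have opposite signs with magnitude roughly $|[f']|\,\mathrm{dist}(\mu,\cdot)$, a sign change occurs. The delicate situation is $\mu$ very near an endpoint. There I would use Lemma \ref{lem1proDe}: when $\mu$ is close to an extreme, condition (\ref{lemma2}) holds and forces the $B$-pair, i.e.\ Case 2, where $\mu$ is interior to the enlarged interval. The hypothesis $\sigma<3/2$ is used precisely here. It guarantees that (\ref{lemma2}) is triggered whenever $\mu$ lies within a fixed fraction of $h$ from an endpoint, so in Case 1 the point $\mu$ stays a distance $\ge c h$ from both ends. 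I expect this $\sigma$-dependent geometric bookkeeping, ensuring a fixed margin, to be the main obstacle.

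\textbf{Item 3.} Apply the mean value theorem to $g$ on the segment between $\mu$ and $y$:
\[
|g(\mu)|=|g(\mu)-g(y)|\ge \min_I|g'|\,|\mu-y|\ge \tfrac12|[f']|\,|\mu-y|.
\]
On the other hand, $g(\mu)=(p^+-f^+)(\mu)-(p^--f^-)(\mu)$, so $|g(\mu)|\le 2C_1h^m\sup|f^{(m)}|$. Combining the two bounds yields (\ref{lemma3_1}) with $C=4C_1$, a constant depending on $m$ and $\sigma$.
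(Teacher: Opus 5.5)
There is a genuine gap in item 2, and it carries over into item 3. You control $p^\pm-f^\pm$ and $(p^\pm)'-(f^\pm)'$ only through the $m$-th order Lagrange remainder. As a result, both your monotonicity bound $|g'|\geq (1-k)|[f']|-2C_1h^{m-1}\sup_{\mathbb{R}\backslash\mu}|f^{(m)}|$ and your endpoint sign change require $h^{m-1}\sup_{\mathbb{R}\backslash\mu}|f^{(m)}|$ to be small relative to $|[f']|$. The hypothesis $h<kh_c$ gives no such control, because $h_c$ only compares $|[f']|$ with $\sup|f''|$. For example, adding to $f$ the perturbation $\eta\delta^2\sin(x/\delta)$ changes $\sup|f''|$ by at most $\eta$, while for $m>2$ it makes $\sup|f^{(m)}|$ arbitrarily large as $\delta\to 0$.

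Neither remedy you mention repairs this. Folding the condition into $k$ makes $k$ depend on $f$. That removes the point of $h_c$ and breaks the proof of Theorem~\ref{teo_approF}, where the step $C_5h|[f']|\leq\frac{4C_5}{k}h^2\sup|f''|$ needs $k$ independent of $f$. Folding it into $C$ can at best make (\ref{lemma3_1}) trivial, since $|\mu-y|\leq 2h$. It cannot produce the existence and uniqueness of $y$ that item 3 presupposes. By contrast, the geometric margin you single out as the main obstacle is routine. With $\mu$ in the left half of $I_j$, failure of the first inequality in (\ref{lemma2}) directly gives $\mu-x_j\geq c(\sigma)h$.

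The missing idea, which is the paper's route, is to carry out item 2 entirely at second order.
\begin{itemize}
\item Take $f^\pm$ to be second-order Taylor extensions. They are globally $C^2$ with $\sup_{\mathbb{R}}|(f^\pm)''|\leq\sup_{\mathbb{R}\backslash\mu}|f''|$.
\item Since $p^\pm$ has degree $m-1\geq 1$, it reproduces linear functions, and its stencil is $\sigma$ quasi-uniform and at distance $O(h)$ from $I$. Hence on $I$ you have $|f^\pm-p^\pm|\leq Dh^2\sup|f''|$ and $|(f^\pm)'-(p^\pm)'|\leq Dh\sup|f''|$, with $D$ depending only on $m$ and $\sigma$.
\item Together with $|(f^\pm)'(t)-(f^\pm)'(\mu)|\leq 2h\sup|f''|$, this yields $|g'|\geq |[f']|-2(D+2)h\sup|f''|\geq\frac{7}{8}|[f']|$ once $h<Kh_c$, with $K=K(m,\sigma)$.
\item For $[f']>0$, using the margin $\min\{\mu-a,b-\mu\}>\frac{3-2\sigma}{4\sigma^2}h$, you get $g(a)\leq -\frac{3-2\sigma}{4\sigma^2}h[f']+(4+2D)h^2\sup|f''|$. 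This is negative for $h<kh_c$, and symmetrically $g(b)>0$.
\end{itemize}
Only afterwards should you switch to $m$-th order extensions, and use them solely to bound $|g(\mu)|\leq 2\tilde{D}h^m\sup|f^{(m)}|$. The polynomials $p^\pm$ do not depend on which extension you use to analyse them, so the second-order lower bound on $|g'|$ remains valid. Your mean value step in item 3 then goes through exactly as you wrote it.
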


\begin{proof}
Taking into account that $k<1$ the first point has been already proven in Lemma \ref{lem1proDe}.
Without loss of generality, we consider that $x_j \leq \mu \leq \frac{x_j+x_{j+1}}{2}=x_M.$ Due to Lemma \ref{lem1proDe} we know that $I_j$ is of type $B$ for $h_{max}<h_c.$
Let us denote $I=[a,b]$ the interval where the subcell resolution type algorithm takes place, either $I_j=[x_{j},x_{j+1}]$ in Case 1 or $I_{j-1}\cup I_j=[x_{j-1},x_{j+1}]$ in Case 2.\\
By Lemma \ref{lem1proDe}, we get that if $\frac{x_{j+1}-\mu}{h_j+h_{j+1}}-\frac{\mu-x_{j}}{h_{j+1}+h_{j+2}}>1/4,$ then $I=I_{j-1}\cup I_j,$ and for all cases we have,
\begin{equation}\label{lemma3_2}
\min\{|\mu-a|,|\mu-b|\} >\frac{3-2\sigma}{4\sigma^2}h.
\end{equation}

Let us also denote as $p^-,$ and $p^+$ the polynomials which appear in the subcell resolution type algorithm of $I.$ We decompose $f$ as,
\begin{equation}\label{lemma3_3}
f=f^- \cdot \chi_{(-\infty,a]} + f^+ \cdot \chi_{[a,\infty)},
\end{equation}
where $f^-$ and $f^+$ are defined as extensions of $f$ by using its left and right Taylor expansions of second order at $\mu$ respectively. Therefore,
$f^-$ and $f^+$ are functions which are globally $C^2$ in $\mathbb{R}$ and they satisfy,
\begin{equation}\label{lemma3_4}
\sup_{\mathbb{R}} |(f^{\pm})^{''}(x)|\leq \sup_{\mathbb{R}\backslash \mu} |f^{''}(x)|.
\end{equation}
Notice that $p^-$ and $p^+$ can be viewed as Lagrange interpolation polynomials, and then for all $t \in I,$ there exists a constant $D$ such that,
\begin{equation}\label{lemma3_5}
|f^{\pm}(t)-p^{\pm}(t)|\leq D h^2
\sup_{\mathbb{R}} |(f^{\pm})^{''}(x)|\leq D h^2 \sup_{\mathbb{R}\backslash \mu} |f^{''}(x)|,
\end{equation}
and therefore,
\begin{equation}\label{lemma3_6}
|(f^{\pm})'(t)-(p^{\pm})'(t)|\leq D h
\sup_{\mathbb{R}} |(f^{\pm})^{''}(x)|\leq D h \sup_{\mathbb{R}\backslash \mu} |f^{''}(x)|.
\end{equation}
Taking into account that $|t-\mu|\leq 2h,$ for $t \in I,$ we also get using the Lagrange mean value theorem,
\begin{equation}\label{lemma3_7}
|(f^{\pm})'(t)-(f^{\pm})'(\mu)|\leq 2h \sup_{\mathbb{R}\backslash \mu} |f^{''}(x)|.
\end{equation}
Hence from (\ref{lemma3_6}) and (\ref{lemma3_7}) we obtain,
\begin{equation}\label{lemma3_8}
|(f^{\pm})'(\mu)-(p^{\pm})'(t)|\leq (D+2)h \sup_{\mathbb{R}\backslash \mu} |f^{''}(x)|.
\end{equation}

Now, using the triangular inequality we get,
\begin{equation}\label{lemma3_9}
|(f^{+})'(\mu)-(f^{-})'(\mu)|=|[f']| \leq |(f^{+})'(\mu)-(p^{+})'(t)|+|(p^{+})'(t)-(p^{-})'(t)|+|(p^{-})'(t)-(f^{-})'(\mu)|,
\end{equation}
and then,
\begin{eqnarray}\label{lemma3_9}
|(p^{+})'(t)-(p^{-})'(t)|&\geq&|[f']| -|(f^{+})'(\mu)-(p^{+})'(t)|-|(p^{-})'(t)-(f^{-})'(\mu)|\\ \notag
&\geq& |[f']| - 2(D+2)h \sup_{\mathbb{R}\backslash \mu} |f^{''}(x)|.\\ \notag
\end{eqnarray}

For $h<\frac{2}{D+2}h_c,$ from (\ref{lemma3_9}) we get that the function $p^+(x)-p^-(x)$ is strictly increasing or strictly decreasing and
in turn that it has at most one root in the interval $I.$ It remains to see that under the given conditions, there exists one root $y \in I.$
Let us suppose that $[f']>0$ (the other case $[f']<0$ is analogous). Following the same track as in \cite{ACDD} one can easily prove using Taylor expansions
as well for nonuniform grids that,
\begin{equation}\label{lemma3_10}
f^{+}(a)-f^{-}(a)\leq -(\mu-a)[f']+(\mu-a)^2 \sup_{\mathbb{R}\backslash \mu} |f^{''}(x)|,
\end{equation}
\begin{equation}\label{lemma3_11}
f^{+}(b)-f^{-}(b)\geq (b-\mu)[f']-(b-\mu)^2 \sup_{\mathbb{R}\backslash \mu} |f^{''}(x)|,
\end{equation}
and using (\ref{lemma3_5}),
\begin{equation}\label{lemma3_12}
p^{+}(a)-p^{-}(a)\leq -(\mu-a)[f']+((\mu-a)^2+2Dh^2) \sup_{\mathbb{R}\backslash \mu} |f^{''}(x)|,
\end{equation}
\begin{equation}\label{lemma3_13}
p^{+}(b)-p^{-}(b)\geq (b-\mu)[f']-((b-\mu)^2+2Dh^2) \sup_{\mathbb{R}\backslash \mu} |f^{''}(x)|.
\end{equation}

Now, using (\ref{lemma3_2}) we obtain,
\begin{equation}\label{lemma3_14}
p^{+}(a)-p^{-}(a)\leq -\frac{3-2\sigma}{4\sigma^2}h[f']+(4+2D)h^2 \sup_{\mathbb{R}\backslash \mu} |f^{''}(x)|,
\end{equation}
\begin{equation}\label{lemma3_15}
p^{+}(b)-p^{-}(b)\geq \frac{3-2\sigma}{4\sigma^2}h[f']-(4+2D)h^2 \sup_{\mathbb{R}\backslash \mu} |f^{''}(x)|.
\end{equation}

For $h<kh_c,$ with $k=\frac{2}{2+D}\frac{3-2\sigma}{4\sigma^2},$ from (\ref{lemma3_14}) and (\ref{lemma3_15}) we get that $p^{+}(a)-p^{-}(a)<0,$ and $p^{+}(b)-p^{-}(b)>0,$ and
therefore there exists a root $y \in I$ of $p^+(x)-p^-(x),$ and the two first points of the lemma are already proven.

To prove the third point we observe that $f^-$ and $f^+$ could be also defined as extensions of $f$ by using its left and right Taylor expansions of $mth$ order at $\mu$ respectively. Then,
$f^-$ and $f^+$  become functions which are globally $C^m$ in $\mathbb{R}$ and they satisfy,
\begin{equation}\label{lemma3_16}
\sup_{\mathbb{R}} |(f^{\pm})^{(m)}(x)|\leq \sup_{\mathbb{R}\backslash \mu} |f^{(m)}(x)|.
\end{equation}
By using known results about Lagrange interpolation it follows that there exists a constant $\tilde{D}>0$ such that $\forall t \in I,$
\begin{equation}\label{lemma3_17}
|f^{\pm}(t)-p^{\pm}(t)|\leq \tilde{D} h^m \sup_{\mathbb{R}\backslash \mu} |f^{(m)}(x)|.
\end{equation}
Thus, defining $g=f^{+}-f^{-}$ and $q=p^{+}-p^{-}$ we get,
\begin{equation}\label{lemma3_17}
|g(t)-q(t)|\leq 2\tilde{D} h^m \sup_{\mathbb{R}\backslash \mu} |f^{(m)}(x)|.
\end{equation}
We also observe that for $h<Kh_c,$ with $K:=\min\{\frac{1}{4D+8},\frac{2}{D+2}\frac{3-2\sigma}{4\sigma^2}\}$ we have,
\begin{eqnarray}\label{lemma3_18}
|q'(t)|=|(p^{+})'(t)-(p^{-})'(t)|&\geq&|[f']| - 2(D+2)h \sup_{\mathbb{R}\backslash \mu} |f^{''}(x)|\\ \notag
&\geq& |[f']|-\frac{1}{8}|[f']|=\frac{7}{8}|[f']|. \notag
\end{eqnarray}
Thus,
\begin{equation}\label{lemma3_19}
2\tilde{D} h^m \sup_{\mathbb{R}\backslash \mu} |f^{(m)}(x)|\geq |q(\mu)|=|q(y)-q(\mu)|=|q'(\theta)||y-\mu|\geq \frac{7}{8}|[f']| |y-\mu|,
\end{equation}
and hence,
\begin{equation} \label{lemma3_20}
|\mu -y|\leq C \frac{h^m \sup_{\mathbb{R}\backslash \mu} |f^{(m)}(x)|}{|[f']|},
\end{equation}
with $C=\frac{16}{7}\tilde{D}.$

\end{proof}

The results in this section are used in the next one to prove a theorem about the approximation capabilities of the propose ENO-SR type interpolation.

\section{Approximation properties of the proposed ENO-SR interpolation.} \label{sec5}
In this section we prove an approximation result for the ENO-SR interpolation defined in
section \ref{sec2}.
\begin{theorem} \label{teo_approF}
	Let $f(x)$ be a continuous function on $\mathbb{R}\backslash \mu,$ with uniformly bounded $mth$-derivative on $\mathbb{R}\backslash \mu,$ with $\mu$ a discontinuity in the first derivative of
	the function. Let $X$ be a $\sigma$ quasi-uniform grid with $\sigma < \frac{3}{2}.$ Then,
	the nonlinear approximation $I_Xf$ satisfies,
	\begin{equation}\label{eq1_sec5}
		||f-I_Xf||_{\infty}\leq Ch^2  \sup_{\mathbb{R}\backslash \mu} |f^{''}(x)|,
	\end{equation}
for all $h=h_{max}>0,$ with $C$ independent on $f.$ Moreover, there exists
	a constant $k, \ 0 <k< 1$ such that, for $h<k h_c,$ with $h_c$ defined in (\ref{lemma2_0}), the following item holds,
	\begin{equation}\label{eq2_sec5}
		||f-I_Xf||_{\infty}\leq Ch^m \sup_{\mathbb{R}\backslash \mu} |f^{(m)}(x)|.
	\end{equation}
\end{theorem}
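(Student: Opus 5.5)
We follow the uniform-grid argument of \cite{ACDD} and split the real line into the intervals where $I_Xf$ is built from a single ENO stencil and the one or two intervals where the subcell resolution step is applied.

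\textbf{Step 1: $G$ intervals.} On an interval $I_i$ labeled $G$ (initially, or after relabeling because $p^-$ and $p^+$ do not intersect), $I_Xf$ is the Lagrange polynomial on an $m$-point ENO stencil containing $x_i,x_{i+1}$.
\begin{itemize}
\item If the stencil lies on one side of $\mu$, the standard Lagrange remainder on a $\sigma$ quasi-uniform grid bounds the error by $C(m,\sigma)h^m\sup|f^{(m)}|$. It also gives $C h^2\sup|f''|$, via the remainder of the degree-one interpolant and the boundedness of the Lebesgue constant. That constant depends only on $m$ and $\sigma$, since all nodes are at distance between $h/\sigma$ and $mh$ of each other in units of $h$.
\item If the stencil crosses $\mu$, write $f=f_1+f_2$ with $f_1=[f'](x-\mu)_+$ as in Lemma \ref{lem1proDe}.
\item The term $f_2$ is $C^1$ with bounded second derivative, so it gives an $O(h^2)\sup|f''|$ error.
\item For $f_1$, the ENO selection through second divided differences is the key point. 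Whenever a stencil touching $\mu$ is selected, some crossing divided difference was not larger than a smooth one, i.e. at most $\frac12\sup|f''|$. By (\ref{lemma2_5}) this forces $|[f']|\cdot\mathrm{dist}/h^2\lesssim \sup|f''|$, hence $|[f']|\,h\lesssim \sup|f''|\,h^2\cdot\sigma$-factors. This is the $\sigma$-analogue of the ACDD argument.
\item Lagrange interpolation of $f_1$ on the stencil then has error $O(|[f']|h)$, which becomes $O(h^2\sup|f''|)$.
\end{itemize}

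\textbf{Step 2: the $B$ interval(s) with an intersection point $\psi$ (or $y$).} On $[a,\psi]$ use $p^-$ and on $[\psi,b]$ use $p^+$, and write $f=f^\pm$ on each side of $\mu$ as in Lemma \ref{lem3proDe}.
\begin{itemize}
\item Where the side of $\psi$ agrees with the side of $\mu$, the error is $|f^\pm-p^\pm|\le Dh^2\sup|f''|$, or $\tilde D h^m\sup|f^{(m)}|$, by (\ref{lemma3_5}) and (\ref{lemma3_17}).
\item On the mismatched piece between $\mu$ and $\psi$, say $t\in[\mu,\psi]$ where $p^-$ is used but $f=f^+$, bound
\[
|f^+(t)-p^-(t)|\le |f^+(t)-p^+(t)|+|q(t)| .
\]
\item For the smooth regime $h<kh_c$, take $k$ as in Lemma \ref{lem3proDe}. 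Since $q(y)=0$ with $|q'|\le |[f']|+O(h\sup|f''|)$, we get $|q(t)|\le C|[f']||t-y|\le C|[f']||\mu-y|$. Then (\ref{lemma3_1}) gives $Ch^m\sup|f^{(m)}|$, which yields (\ref{eq2_sec5}). In this regime Step 1 already gives $h^m$, because no $G$ stencil crosses $\mu$ once the detection works (Lemma \ref{lem1proDe}).
\item For the $h^2$ bound for all $h$ on the $B$ interval, write $q=g+(q-g)$ with $|q-g|\le 2Dh^2\sup|f''|$ and $g(t)=[f'](t-\mu)+O((t-\mu)^2\sup|f''|)$. Use the sign information: at $\psi$ we have $q(\psi)=0$, so $|g(\psi)|\lesssim h^2\sup|f''|$. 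Combined with $g$ being nearly linear with slope $[f']$, this gives $|[f']||\psi-\mu|\lesssim h^2\sup|f''|$ whenever $[f']$ dominates. If instead $|[f']|\lesssim h\sup|f''|$, then $|g(t)|\le (|[f']|h+h^2\sup|f''|)\lesssim h^2\sup|f''|$ directly.
\end{itemize}
For $h\ge kh_c$ the same dichotomy holds because $|[f']|=4h_c\sup|f''|\le (4/k)h\sup|f''|$. Hence $|f_1|$-type contributions are automatically $O(h^2\sup|f''|)$ on every interval, whichever branch the algorithm takes.

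\textbf{Main obstacle.} The hardest part is Step 1 for crossing ENO stencils, and the analogous step for the $B$ interval when $h\ge kh_c$. There one must make the ACDD observation quantitative on nonuniform grids. Either $|[f']|\lesssim h\sup|f''|$, in which case everything is trivially $O(h^2)$, or the divided-difference comparison prevents crossing stencils. All constants must be tracked through $\sigma$ using $h_{\min}\ge h/\sigma$ and the restriction $\sigma<3/2$. I would first try to reduce the whole $h^2$ estimate to this dichotomy, treating the large-jump case through Lemmas \ref{lem1proDe}–\ref{lem3proDe}.
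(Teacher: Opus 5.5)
Your argument, in its final form, is correct and is essentially the paper's. You split into two regimes. For $h<kh_c$, Lemma \ref{lem3proDe} controls the mismatch zone between $\mu$ and $y$. There you bound $|q(t)|=|q(t)-q(y)|$ by the mean value theorem where the paper Taylor-expands $f^+-f^-$, and you also spell out the $h^2$ bound in this regime, which the paper leaves implicit. For $h\ge kh_c$, $|[f']|=4h_c\sup|f''|\le (4/k)h\sup|f''|$ turns the $O(h|[f']|)$ interpolation error of $f_1$ into $O(h^2\sup|f''|)$, exactly as in (\ref{eq8_sec5}).

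Do drop the ENO-selection bullet of Step 1, however. A crossing difference such as $D_{j-1}f_1=[f'](x_{j+1}-\mu)/(h_{j+1}(h_j+h_{j+1}))$ can be small merely because $\mu$ is close to $x_{j+1}$, so it gives no bound $|[f']|h\lesssim h^2\sup|f''|$. Likewise, for $h<kh_c$ it is the construction of the $G$ stencils, not divided-difference comparisons as your closing paragraph suggests, that keeps them off $\mu$. Your dichotomy already makes that bullet superfluous.
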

\begin{proof}
	It is clear by construction that for the value of $k$ given in previous lemmas, if the function is of class $C^m$ in the support of the local reconstruction, then the reconstruction satisfies,
	\begin{equation}\label{eq2b_sec5}
		|f(x)-I_Xf(x)|\leq Ch^m \sup_{\mathbb{R}\backslash \mu} |f^{(m)}(x)|,
	\end{equation}
	 due to classical Lagrange interpolation. This happens as much in a $G$ interval, as in a $B$ interval that is a false detection.\\
	Let us assume now that $x$ belongs to a pair of adjacent intervals of type $B,$ which contain the singularity $\mu.$ We assume, without loss of generality, that $x_j \leq \mu \leq x_j+\frac{h_{j+1}}{2},$ and let us use again the notation $I=[a,b], f^{\pm},p^{\pm}$ that was already used in Lemma \ref{lem3proDe}. Let us also assume that $\mu \leq y,$ being the case $\mu>y$ pretty similar. For $x \in [a,\mu],$ we have the estimate,
	\begin{equation}\label{eq3_sec5}
	|f(x)-I_Xf(x)|\leq |f^{-}(x)-p^{-}(x)|\leq Ch^m \sup_{\mathbb{R}\backslash \mu} |f^{(m)}(x)|,
    \end{equation}
	and for $x \in [y,b],$
	\begin{equation}\label{eq3b_sec5}
		|f(x)-I_Xf(x)|\leq |f^{+}(x)-p^{+}(x)|\leq Ch^m \sup_{\mathbb{R}\backslash \mu} |f^{(m)}(x)|.
	\end{equation}
	
	It remains to study the case $\mu < x < y.$ We can write,
	\begin{equation}\label{eq4_sec5}
	|f(x)-I_Xf(x)|= |f^{+}(x)-p^{-}(x)|\leq |f^{+}(x)-f^{-}(x)| + |f^{-}(x)-p^{-}(x)|.
    \end{equation}	
	The second term of the right hand side of (\ref{eq4_sec5}) is bounded by $C_1h^m\sup_{\mathbb{R}\backslash \mu} |f^{(m)}(x)| $ just by the classical theory of  Lagrange interpolation. For the first term,
	we consider Taylor expansions of second order and we get,
	\begin{eqnarray}\label{eq5_sec5}
		|f^{+}(x)-f^{-}(x)|&\leq& |[f'](y-\mu)|+ |y-\mu|^2 \sup_{\mathbb{R}\backslash \mu} |f^{''}(x)|,\\ \notag
        &=& |y-\mu|(|[f']|+|y-\mu|\sup_{\mathbb{R}\backslash \mu} |f^{''}(x)|).		
	\end{eqnarray}	
	By using Lemma \ref{lem3proDe}, we can see that there exists a constant $C_2$ such that $|[f']|+|y-\mu|\sup_{\mathbb{R}\backslash \mu} |f^{''}(x)|\leq C_2 |[f']|,$ and plugging this information into (\ref{eq5_sec5}) and applying again Lemma \ref{lem3proDe}, we get
	$|f^{+}(x)-f^{-}(x)|\leq D h^m \sup_{\mathbb{R}\backslash \mu}  |f^{(m)}(x)|, $ with $D$ constant.\\
	This finishes the proof for the case $h<k h_c.$ For $h\geq k h_c,$ the estimate,
		\begin{equation}\label{eq6_sec5}
		|f^{+}(x)-f^{-}(x)|\leq C_3 h^m \sup_{\mathbb{R}\backslash \mu}  |f^{(m)}(x)|,
	\end{equation}	
	is valid if $|x-\mu|\geq (m+1)h_{min},$ and therefore we also have,
	 \begin{equation}\label{eq7_sec5}
	 	|f^{+}(x)-f^{-}(x)|\leq C_4 h^2 \sup_{\mathbb{R}\backslash \mu}  |f^{''}(x)|.
	 \end{equation}	
	
	Let us now prove that the estimate in (\ref{eq7_sec5}) is also valid if $|x-\mu|< (m+1)h_{min}.$ This fact comes from decomposing $f=f_1+f_2$ in the same way as in
	 Lemma \ref{lem1proDe}. The interpolation errors for $f_1$ and $f_2$ are dominated by
	 $C_5h[f']$ and $C_6h^2\sup_{\mathbb{R}\backslash \mu}  |f^{''}(x)|$ respectively. And taking into account that $h\geq k h_c,$ the first bound transforms,
	 \begin{equation}\label{eq8_sec5}
	 	C_5h[f']\leq 4C_5hh_c\sup_{\mathbb{R}\backslash \mu}  |f^{''}(x)|\leq \frac{4C_5}{k}h^2 \sup_{\mathbb{R}\backslash \mu}  |f^{''}(x)|,
	 \end{equation}	
 what finishes this case and the proof.
\end{proof}

\begin{remark}
Lemma \ref{lem3proDe} is also true for $3-$local $\sigma$ quasi-uniform grids with $\sigma<\frac{3}{2}$ according to the following definition.
\begin{definition}
A nonuniform mesh $X=(x_i)_{i\in \mathbb{Z}}$ is said to be $n-$local  $\sigma$ quasi-uniform grid if for any $n$ consecutive grid spacings $\{h_i,\ldots,h_{i+n-1}\}$ there exist  a finite constant $\sigma$ such that $\frac{h_{max}}{h_{min}}\leq \sigma,$ where $h_{min}=\min\limits_{s=0,\ldots,n-1} h_{i+s},$
$h_{max}=\max\limits_{s=0,\ldots,n-1} h_{i+s}.$
\end{definition}
\end{remark}

\section{Numerical experiments} \label{sec6}

Let us consider the following functions depending on a parameter $d$
\begin{equation} \label{funcionf}
f_d(x):=
	\left\{\begin{array}{ll}
		(x-\frac{\pi}{8})^2+d(x-\frac{\pi}{8})+\cos{(\frac{\pi x}{2})}, & -1\leq x \leq \frac{\pi}{8},\\
		\cos{(\frac{\pi x}{2})}, & \frac{\pi}{8}<x\leq 1,\\
	\end{array} \right.
\end{equation}
where $d \ \in \{4,2,1,\frac{1}{2},\frac{1}{4},\frac{1}{8},\frac{1}{16},\frac{1}{32},\frac{1}{64},\frac{1}{128}\}.$ These functions present a discontinuity in the first derivative at
the point $\mu=\frac{\pi}{8}.$ The smaller $d,$ the weaker the discontinuity.

We carry out two numerical experiments, one addressing the approximation order locating the position of the corner discontinuity, and another one to measure the approximation errors and
the approximation orders when interpolating with the presented technique over $\sigma$ quasi-uniform grids.

For the first experiment, we consider a $\sigma$ quasi-uniform grid $X^0=(x_i^0)_{i=0}^{n}$ with $\sigma=2,$ consisting of $n+1=22$ non equally spaced points, $x_0^0=-1,$ $x_{21}^0=1.$
Then, we perform a process of subdivision of the grid, giving rise to successive grids $X^k$ obtained by computing $x_{2j}^k=x_{j}^{k-1},$ $x_{2j+1}^k=\frac{x_j^{k-1}+x_{j+1}^{k-1}}{2}.$ For each grid resolution, we run the corner detection mechanism with
$m=4,$ for the different values of $d.$ Since for the defined test function the critical spacing
$$h_c=\frac{|[f'_d]|}{4\sup_{x \in [-1,1]\backslash \frac{\pi}{8}}|f''_d(x)|}=\frac{d}{8},$$
according to the proven theoretical results, we expect to attain fourth order of accuracy for grids
where $h_{max}<h_c.$ For each scale we have computed the error in location of the corner discontinuity $e_k=|\mu - \psi|,$ where $\psi$ denotes the computed discontinuity approximation and $\mu$ the exact location.
After that, we compute a sequence of numerical approximation orders by calculating
$$p_k=\frac{log_2{e_{k-1}}}{log_2{e_{k}}}.$$
In Table \ref{tab1}, we can see the different values of $h_{max}$ at the considered grids.
In Table \ref{tab2}, we can observe that for larger values of $d$ the algorithm attains fourth order of approximation for all scales. When $d$ becomes smaller and smaller, the first scales do not give fourth order of accuracy, since the grid spacing is not small enough. However, we observe that even for slightly larger $h$ than $h_c$ the fourth order accuracy is targeted.

\begin{table}[!ht]
\begin{center}
\begin{tabular}{|c|c|c|c|c|c|c|c|}
\hline
$k$  & $0$ & $1$ & $2$ & $3$ & $4$ & $5$ & $6$ \\
\hline
$h_{max}$ & $0.1270$ & $0.0635$ & $0.0317$ & $0.0159$ & $0.0079$ & $0.0040$ & $0.0020$\\
\hline
\end{tabular}
\caption{\small\textit{Values of the maximum grid spacing $h_{max}$ for each grid $X^k,$ where $k$ denotes the scale.}} \label{tab1}
\end{center}
\end{table}

\begin{table}[!ht]
\begin{center}
\begin{tabular}{|c|c|c|c|c|c|}
\hline
Cases  & \multicolumn{5}{|c|}{Refinement grid level $k$}  \\
\hline \multirow{4}{*}{Parameter $d$}  &  $e_0$ & $e_1$ & $e_2$ & $e_3$ & $e_4$ \\
 \cline{2-6} & $p_0$ & $p_1$ & $p_2$ & $p_3$ & $p_4$ \\
 \cline{2-6} & $e_5$ & $e_6$ &  & &  \\
 \cline{2-6} & $p_5$ & $p_6$ &  & &  \\
\hline \multirow{3}{*}{$d=4$}  &  2.9630e-05 & 1.3626e-06 & 1.6274e-07 & 8.6763e-09 & 4.2243e-10  \\
 \cline{2-6} & - & 4.4427 & 3.0657 & 4.2293 &  4.3603 \\
\cline{2-6} &  1.2697e-11 & 8.0550e-13 &  &  &  \\
 \cline{2-6} & 5.0561 & 3.9785 &  &  & \\
\hline \multirow{3}{*}{$d=2$}  &  5.9371e-05 & 2.7258e-06 & 3.2548e-07 & 1.7353e-08 & 8.4487e-10  \\
 \cline{2-6} & - & 4.4450 & 3.0660 & 4.2293 &  4.3603 \\
\cline{2-6} &  2.5395e-11 & 1.6097e-12 &  &  &  \\
 \cline{2-6} & 5.0561 & 3.9797 &  &  & \\
\hline \multirow{3}{*}{$d=1$}  &  1.1918e-04 & 5.4543e-06 & 6.5100e-07 & 3.4706e-08 & 1.6897e-09  \\
 \cline{2-6} & - & 4.4496 & 3.0667 & 4.2294 &  4.3603 \\
\cline{2-6} &  5.0791e-11 & 3.2187e-12 &  &  &  \\
 \cline{2-6} & 5.0561 & 3.9800 &  &  & \\
\hline \multirow{3}{*}{$d=5e-01$}  &  2.4007e-04 & 1.0919e-05 & 1.3021e-06 & 6.9412e-08 & 3.3795e-09  \\
 \cline{2-6} & - & 4.4585 & 3.0679 & 4.2295 &  4.3603\\
\cline{2-6} &  1.0158e-10 & 6.4376e-12 &  &  &  \\
 \cline{2-6} & 5.0561 & 3.9800 &  &  & \\
\hline \multirow{3}{*}{$d=2.5e-01$}  &  4.8680e-04 & 2.1881e-05 & 2.6047e-06 & 1.3883e-07 & 6.7590e-09  \\
 \cline{2-6} & - & 4.4756 & 3.0705 & 4.2298 &  4.3603\\
\cline{2-6} &  2.0316e-10 & 1.2875e-11 &  &  &  \\
 \cline{2-6} & 5.0561 & 3.9800 &  &  & \\
\hline \multirow{3}{*}{$d=1.25e-01$}  &  9.9812e-04 & 4.3941e-05 & 5.2111e-06 & 2.7766e-07 & 1.3518e-08  \\
 \cline{2-6} & - & 4.5056 & 3.0759 & 4.2302 &  4.3604\\
\cline{2-6} &  4.0633e-10 & 2.5750e-11 &  &  &  \\
 \cline{2-6} & 5.0561 & 3.9800 &  &  & \\
\hline \multirow{3}{*}{$d=6.25e-02$}  &  4.7770e-01 & 8.8657e-05 & 1.0429e-05 & 5.5537e-07 & 2.7036e-08  \\
 \cline{2-6} & - & 12.3956 & 3.0877 & 4.2310 &  4.3605\\
\cline{2-6} &  8.1265e-10 & 5.1503e-11 &  &  &  \\
 \cline{2-6} & 5.0561 & 3.9800 &  &  & \\
\hline \multirow{3}{*}{$d=3.125e-02$}  &  4.7770e-01 & 1.8091e-04 & 2.0878e-05 & 1.1109e-06 & 5.4073e-08  \\
 \cline{2-6} & - & 11.3666 &3.1153 & 4.2322 &  4.3607\\
\cline{2-6} &  1.6253e-09 & 1.0301e-10 &  &  &  \\
 \cline{2-6} & 5.0561 & 3.9798 &  &  & \\
\hline \multirow{3}{*}{$d=1.5625e-02$}  &  4.7770e-01 & 3.6108e-01 & 3.7317e-01 & 2.2222e-06 & 1.0815e-07  \\
 \cline{2-6} & - & 0.4037 & -0.0475 & 17.3575 &  4.3609\\
\cline{2-6} &  3.2507e-09 & 2.0601e-10 &  &  &  \\
 \cline{2-6} & 5.0562 & 3.9799 &  &  & \\
\hline
\end{tabular}
\end{center}
\end{table}

\begin{table}[!ht]
\begin{center}
\begin{tabular}{|c|c|c|c|c|c|}

\hline \multirow{3}{*}{$d=7.1825e-03$}  &  4.7770e-01 & 3.6108e-01 & 3.7317e-01 & 4.0099e-01 & 2.1631e-07  \\
 \cline{2-6} & - & 0.4038 & -0.0475 & -0.1037 &  20.8220\\
\cline{2-6} &  6.5013e-09 & 4.1200e-10 &  &  &  \\
 \cline{2-6} & 5.0562 & 3.9800 &  &  & \\
\hline

\end{tabular}
\caption{\small\textit{Approximation errors $e_k$ in the infinity norm,  and approximation orders $p_k$  obtained at scale $k, k= 0,1,2,3,4,5,6$ for the detection of the corner discontinuity in the functions
 $f_d(x),$ with $d \ \in \{4,2,1,\frac{1}{2},\frac{1}{4},\frac{1}{8},\frac{1}{16},\frac{1}{32},\frac{1}{64},\frac{1}{128}\}.$
}} \label{tab2}
\end{center}
\end{table}

For the second experiment we measure the approximation errors and
the approximation orders when interpolating with the presented technique over the same nested grids as in the first experiment. This time we call $E_k$ to the approximation errors
$E_k:=||f_d(.)-I(.,f)||_{\infty}$ in infinity norm, and $P_k$ to the numerical approximation orders computed by $P_k=\frac{log_2{E_{k-1}}}{log_2{E_{k}}}.$ We carry out the computations for the different values of
the parameter $d,$ and the results can be seen in Table \ref{tab3}. The observations are quite coincident with those of the first experiment. The order of approximation becomes fourth order for values of $h_{max}< h_c$ and slightly larger values. When the parameter $d$ is smaller $h_c$ is also smaller and finer refinements must be used to capture the discontinuity and approximate adequately with fourth order.

\begin{table}[!ht]
\begin{center}
\begin{tabular}{|c|c|c|c|c|c|}
\hline
Cases  & \multicolumn{5}{|c|}{Refinement grid level $k$}  \\
\hline \multirow{4}{*}{Parameter $d$}  &  $E_0$ & $E_1$ & $E_2$ & $E_3$ & $E_4$ \\
 \cline{2-6} & $P_0$ & $P_1$ & $P_2$ & $P_3$ & $P_4$ \\
 \cline{2-6} & $E_5$ & $E_6$ &  & &  \\
 \cline{2-6} & $P_5$ & $P_6$ &  & &  \\
\hline \multirow{3}{*}{$d=4$}  &  1.5598e-04 & 3.5446e-05 & 1.8539e-06 & 8.5901e-08 & 4.8809e-09  \\
 \cline{2-6} & - & 2.1377 & 4.2570 & 4.4317 &  4.1375 \\
\cline{2-6} &  2.7073e-10 & 1.3498e-11 &  &  &  \\
 \cline{2-6} & 4.1722 & 4.3261 &  &  & \\
\hline \multirow{3}{*}{$d=2$}  &  1.5598e-04 & 3.5446e-05 & 1.8539e-06 & 8.5901e-08 & 4.8809e-09  \\
 \cline{2-6} & - & 2.1377 & 4.2570 & 4.4317 &  4.1374 \\
\cline{2-6} &  2.7073e-10 & 13498e-11 &  &  &  \\
 \cline{2-6} & 4.1722 & 4.3261 &  &  & \\
\hline \multirow{3}{*}{$d=1$}  &  1.5598e-04 & 3.5445e-05 & 18539e-06 & 8.5901e-08 & 4.8808e-09  \\
 \cline{2-6} & - & 2.1377 & 4.2570 & 4.4317 &  4.1375 \\
\cline{2-6} &  2.7073e-10 & 1.3498e-11 &  &  &  \\
 \cline{2-6} &  4.1722 & 4.3261 &  &  & \\
\hline \multirow{3}{*}{$d=5e-01$}  &  1.5598e-04 & 3.5446e-05 & 1.8539e-06 & 8.5901e-08 & 4.8809e-09  \\
 \cline{2-6} & - & 2.1377 & 4.2570 & 4.4317 &  4.1375\\
\cline{2-6} &  2.7073e-10 & 1.3498e-11 &  &  &  \\
 \cline{2-6} & 4.1722 & 4.3260 &  &  & \\
\hline \multirow{3}{*}{$d=2.5e-01$}  &  1.5598e-04 & 9.7662e-05 & 1.8539e-06 & 8.5901e-08 & 4.8809e-09  \\
 \cline{2-6} & - & 0.6755 & 5.7192 & 4.4317 &  4.1375\\
\cline{2-6} &  2.7073e-10 & 1.3498e-11 &  &  &  \\
 \cline{2-6} & 4.1722 &  4.3261 &  &  & \\
\hline \multirow{3}{*}{$d=1.25e-01$}  &  1.5598e-04 & 9.7662e-05 & 1.8539e-06 & 8.5901e-08 & 4.8809e-09  \\
 \cline{2-6} & - & 0.6755 & 5.7192 & 4.4317 &  4.1375\\
\cline{2-6} &  2.7073e-10 & 1.3498e-11 &  &  &  \\
 \cline{2-6} & 4.1722 & 4.3260 &  &  & \\

\hline

\end{tabular}
\end{center}
\end{table}

\begin{table}[!ht]
\begin{center}
\begin{tabular}{|c|c|c|c|c|c|}
\hline \multirow{3}{*}{$d=6.25e-02$}  &  9.7337e-04 & 9.7662e-05 & 1.8539e-06 & 8.5901e-08 & 4.8809e-09  \\
 \cline{2-6} & - & 3.3171 & 5.7192 & 4.4317 &  4.1375\\
\cline{2-6} &  2.7073e-10 & 1.3498e-11 &  &  &  \\
 \cline{2-6} & 4.1722 & 4.3261 &  &  & \\
\hline \multirow{3}{*}{$d=3.125e-02$}  &  6.0132e-04 & 9.7662e-05 & 1.8538e-06 & 8.5901e-08 & 4.8809e-09  \\
 \cline{2-6} & - & 2.6223 &5.7192 & 4.4317 &  4.1375\\
\cline{2-6} &  2.7073e-10 & 1.3498e-11 &  &  &  \\
 \cline{2-6} & 4.1722 & 4.3261 &  &  & \\
\hline
\hline \multirow{3}{*}{$d=1.5625e-02$}  &  8.1175e-04 & 9.3378e-05 & 1.1919e-05 & 8.5901e-08 & 4.8809e-09  \\
 \cline{2-6} & - & 3.1199 & 2.9698 & 7.1163 &  4.1375\\
\cline{2-6} &  2.7073e-10 & 1.3498e-11 &  &  &  \\
 \cline{2-6} & 4.1722 & 4.3261 &  &  & \\
\hline
\hline \multirow{3}{*}{$d=7.1825e-03$}  &  4.7140e-04 & 1.1970e-04 & 6.2468e-06 & 5.6274e-06 & 4.8809e-09  \\
 \cline{2-6} & - & 1.9776 & 4.2601 & 0.1506 &  10.1711\\
\cline{2-6} &  2.7073e-10 & 1.3498e-11 &  &  &  \\
 \cline{2-6} & 4.1722 & 4.3260 &  &  & \\
\hline

\end{tabular}
\caption{\small\textit{Approximation errors $E_k$ in the infinity norm, and approximation orders $P_k$  obtained at scale $k, k= 0,1,2,3,4,5,6$ for the interpolation with the presented technique adapted to corner discontinuities of the functions
 $f_d(x),$ with $d \ \in \{4,2,1,\frac{1}{2},\frac{1}{4},\frac{1}{8},\frac{1}{16},\frac{1}{32},\frac{1}{64},\frac{1}{128}\}.$
}} \label{tab3}
\end{center}
\end{table}

Notice that $\sigma=2$ is not included in the range of values satisfying the hypothesis of Theorem \ref{teo_approF}, proving that the conditions on the grid are sufficient to prove the result,
but not necessary.

\section{Conclusions} \label{sec7}

In this article we have extended a corner detection mechanism \cite{ACDD} to $\sigma$ quasi-uniform grids, and in turn also the corresponding SR-type reconstruction algorithm.
The theoretical approximation orders of these algorithms have been studied, proving that they remain the same as with uniform grids for a wide range of $3-$local $\sigma$ quasi-uniform grids.
Numerical experiments have been carried out, examining the validity and extension of the presented results.

\end{document}